\documentclass[a4paper,11pt]{article}

\usepackage{hyperref}

\usepackage{amsmath,amsthm,amssymb}

\usepackage[dvips]{geometry}
\geometry{a4paper,scale=0.7}
\usepackage{graphicx}
\usepackage[noadjust]{cite}

\usepackage[capitalize]{cleveref}

\newcommand{\diff}{\,\mathrm{d}}

\newcommand{\dt}{\diff t}
\newcommand{\dX}{\diff X}
\newcommand{\dW}{\diff W}

\newcommand{\R}{\mathbb{R}}
\newcommand{\Ih}{I^{\hmax}}
\newcommand{\hmax}{{\tilde{h}}}
\newtheorem{definition}{Definition}
\newtheorem{theorem}{Theorem}

\newtheorem{example}{Example}

\providecommand{\E}{\operatorname{E}}
\providecommand{\N}{\mathbb{N}}
\newcommand{\xvar}{{x}}
\newcommand{\bO}{\ensuremath{\mathcal{O}}}

\newcommand{\rh}[1]{\rho(#1)}
\newcommand{\varphit}{{\varphi_{{t}}}}
\newcommand{\Mpure}{\mu}
\newcommand{\dMpure}{\diff\Mpure}
\newcommand{\M}{{\Mpure_{{t}}}}

\newcommand{\numweightpure}{\Phi}
\newcommand{\numweight}{{\numweightpure^{\mu}_{{t}}}}
\newcommand{\ODEnumweight}{{\numweightpure_{{t}}}}
\newcommand{\hnumweight}{\hat{\numweightpure}}

\newcommand{\pd}{\ensuremath{p_d}}
\newcommand{\pmu}{p_\Mpure}
\newcommand{\DM}[1]{{\Delta_{{#1}}\Mpure}}


\begin{document}

\title{High order numerical integrators for single integrand Stratonovich SDEs}

\author{{David Cohen$^{1,2}$\thanks{e-mail: david.cohen@chalmers.se}},
	\ Kristian Debrabant$^3$\thanks{e-mail: debrabant@imada.sdu.dk}
	\ and Andreas R\"o{\ss}ler$^4$\thanks{e-mail: roessler@math.uni-luebeck.de}
	\bigskip
	\\
	\small{$^1$Department of Mathematical Sciences, } \\
	\small{Chalmers University of Technology and University of Gothenburg, Gothenburg, Sweden} \\[0.2cm]
	\small{$^2$Department of Mathematics and Mathematical Statistics, } \\
	\small{Umeå University, Sweden} \\[0.2cm]
	\small{$^3$Department of Mathematics and Computer Science,} \\
	\small{University of Southern Denmark, Denmark} \\[0.2cm]
	\small{$^4$Institute of Mathematics,} \\
	\small{Universit\"at zu L\"ubeck, Germany}
}

\date{}

\maketitle

\begin{abstract}
We show that applying any deterministic B-series method of order $p_d$ with a random step size
to single integrand SDEs gives a numerical method converging in the mean-square and weak sense
with order $\lfloor p_d/2\rfloor$.
As an application, we derive high order energy-preserving methods for stochastic Poisson systems
as well as further geometric numerical schemes for this wide class of Stratonovich SDEs.
\end{abstract}

\section{Introduction}\label{sec:intro}
The last years have seen a great interest in the numerical analysis of single integrand
Stratonovich stochastic differential equations
\begin{equation}
  \dX(t) =  f(X(t))\left(\lambda\dt + \sigma\circ \dW(t))= f(X(t)\right)\circ \dMpure(t), \qquad {X}(t_0)=x_0,
  \label{eq:sde}
\end{equation}
see, e.\,g.\ \cite{MR629977,MR1747753,MR1951908,MR3506778,MR2674236,MR2861707,MR2956993,MR3218332,
cohen14epi,MR3506248,debrabant17cah,MR3973463,MR3952248} as well as to the text below.
Here, $t\geq t_0 \geq 0$, $x_0\in\R^d$, $f\colon\R^d\to\R^d$,
$(W(t))_{t \geq 0}$ is a standard one-dimensional Wiener process,
$\lambda\in\{0,1\}$ and $\sigma \in \R$ are given constants, and
$\Mpure({s}):=\lambda{s} + \sigma W(s)$ for $s\geq 0$.

Stochastic differential equations (SDEs) of this form arise, e.\,g., when the right-hand side of
an ordinary differential equation (ODE) model is randomly perturbed. Interesting applications are:

\begin{example}[Fatigue cracking \cite{sobczyk87smf}]
  \[
  \dX(t)=a X^p(t)\dt+b X^p(t)\circ\dW(t),\qquad p>1, a,b\in\R.
  \]
\end{example}

\begin{example}[Stochastic Hamiltonian systems \cite{MR3579605,MR1951908}]
\[\dX(t)
  =J^{-1} \nabla H(X(t)) \left(\dt+c\circ\dW(t)\right),\qquad J=\begin{pmatrix} 0 & -I \\ I & 0\end{pmatrix}, c\in\R.
\]
\end{example}

\begin{example}[Stochastic perturbations of Poisson systems \cite{cohen14epi}]
\[\dX(t)
  =B(X(t)) \nabla H(X(t)) \left(\dt+c\circ\dW(t)\right),\qquad B=-B^\top, c\in\R.
\]
\end{example}

In the present communication, we generalize the main results from \cite{debrabant17cah} from Runge--Kutta methods
to B-series methods. In particular, this permits to show that
when applying any (deterministic) B-series numerical integrators of order $p_d$
to \eqref{eq:sde} by replacing the deterministic step size $h$ in the numerical scheme
by $\lambda h+\sigma (W(t+h)-W(t))$ when calculating one step of the approximation on the
time interval $[t,t+h]$ respectively,
one then automatically
obtains a time integrator with step size $h$ of mean-square as well as
weak order $\lfloor p_d/2\rfloor$ for the SDE \eqref{eq:sde}.
We recall that the notation $\lfloor x\rfloor$ denotes the floor function of a real number $x$.

On top of that, it can easily
be observed that, in general, when a (deterministic) numerical integrator possesses
some geometric properties, then the same geometric properties hold for the time integrator applied
to single integrand SDEs \eqref{eq:sde}. This observation thus allows to carry forward
various results already obtained in the literature as well as new ones, see for instance \cref{sec:rdmDE} below.

B-series methods encompass any reasonable one-step time integrators for ODEs:
Besides Runge--Kutta methods \cite{hairer10sodI,hairer06gni}, other one-step methods can be represented as B-series,
e.\,g.\ Taylor methods \cite{barrio05pot,debrabant11bao,debrabant11cos},
averaged vector field methods \cite{quispel08anc,celledoni09epr}, and $q$-derivative Runge--Kutta methods \cite{hairer10sodI}.

\section{Convergence of B-series methods applied to single integrand SDEs}\label{sec:B-seriesconvergence}
B-series for solutions to SDEs and their numerical solution by stochastic Runge--Kutta methods have been developed in
\cite{burrage96hso,burrage00oco} to study strong convergence in the Stratonovich case,
in \cite{MR2739567,MR2669396} for strong converge in the It\^{o} as well as the Stratonovich case, in \cite{komori97rta} and
\cite{komori07mrt} to study weak convergence in the Stratonovich case and in \cite{roessler04ste,roessler06rta} to study weak convergence in both the
It\^{o} and the Stratonovich case. A uniform and self-contained theory for the construction of stochastic B-series for the exact solution of SDEs
and its numerical approximation by stochastic Runge--Kutta methods is given in \cite{debrabant08bao}. Based on the notation used there, in \cite{debrabant17cah} the
B-series for the exact solution and Runge--Kutta approximations of single-integrand SDEs were derived.
For convenience, we summarize the results we will need in the following.
Due to the single integrand we do, similar to the ODE case \cite{butcher08nmf}, only need non-colored trees in the expansion of the solution.
\begin{definition}[Trees]
  The set of rooted trees $T$ related to single-integrand SDEs is recursively defined as follows:
  \begin{enumerate}
    \item[a)] The empty tree $\emptyset$ and the graph $\bullet=[\emptyset]$ with only one vertex {belong} to $T$.
  \end{enumerate}
  For a positive integer $\kappa$, let $\tau=[\tau_1,\tau_2,{\dots},\tau_{\kappa}]$ be the tree
  formed by joining the subtrees
  $\tau_1,\tau_2,{\dots},\tau_{\kappa}$ each by a single branch to a
  common root.
  \begin{enumerate}
    \item[b)] If $\tau_1,\tau_2,{\dots},\tau_{\kappa} \in T$ then
      $\tau=[\tau_1,\tau_2,{\dots},\tau_{\kappa}] \in T$.
  \end{enumerate}
\end{definition}

\begin{definition}[Elementary differentials]
  For a tree $\tau \in T$ the elementary differential is
  a mapping $F(\tau):\R^d \rightarrow \R^d$ defined
  recursively by
  \begin{enumerate}
    \item[a)] $F(\emptyset)(\xvar)=\xvar$, \\ \mbox{}
    \item[b)] $F(\bullet)(\xvar)=f(\xvar)$, \\ \mbox{}
    \item[c)] If $\tau=[\tau_1,\tau_2,{\dots},\tau_{\kappa}] \in T\setminus\{\emptyset\}$
      then
      \[
	F(\tau)(\xvar)=f^{(\kappa)}(\xvar)
      \big(F(\tau_1)(\xvar),F(\tau_2)(\xvar),{\dots},F(\tau_{\kappa})(\xvar)\big)\]
  \end{enumerate}
{where $\xvar\in\R^d$.}
\end{definition}

\begin{definition}[B-series]
  Consider for each $\tau\in T$ a stochastic process $(\phi(\tau)(h))_{h\geq0}$ satisfying
  \[
    \phi(\emptyset)\equiv1 \;\text{ and }\; \phi(\tau)(0)=0,\quad
    \text{for } \tau\in T \backslash \{\emptyset\}.
  \]
  Let $\xvar\in\R^d$ and $h\in\R$. A (stochastic) B-series is then
  a formal series of the form
  \[
    B(\phi,\xvar; h) = \sum_{\tau \in T}
    \alpha(\tau)\cdot\phi(\tau)(h)\cdot F(\tau)(\xvar),
  \]
  where
  $\alpha: T\rightarrow \mathbb{Q}$ is given by
  \begin{align*}
    \alpha(\emptyset)&=1,&\alpha(\bullet)&=1,
    &\alpha(\tau=[\tau_1,\cdots,\tau_{\kappa}])&=
    \frac{1}{r_1!r_2!\cdots r_{q}! } \prod_{j=1}^{\kappa} \alpha(\tau_j),
  \end{align*}
  where $r_1,r_2,{\dots},r_{q}$ count equal trees among
  $\tau_1,\tau_2,{\dots},\tau_{\kappa}$.
\end{definition}

We are now able to state the B-series of the exact solution to the SDE \eqref{eq:sde},
see also \cite{debrabant08bao,MR2739567}.
In the following,
$\rh{\tau}$ denotes the number of nodes in a tree $\tau$.

\begin{theorem}[\cite{debrabant17cah}]\label{th:BseriesexactsolutionODE}
  Let
  $\gamma: T\rightarrow\N$ be given by
  \begin{gather*}
    \gamma(\emptyset)=1,\qquad\gamma(\bullet)=1,\\
    \gamma([\tau_1,\dots,\tau_{\kappa}])=\rh{[\tau_1,\dots,\tau_{\kappa}]}\prod_{j=1}^{\kappa} \gamma(\tau_j).
  \end{gather*}
  Then the solution $X({t+}h)$ of \eqref{eq:sde} {starting at the
point $(t,\xvar)$} can be written as a B-series $B(\varphit,\xvar; h)$ with
  \begin{gather}\label{eq:explweightexsol}
    \varphit(\tau)(h)=\frac{\M(h)^{\rh{\tau}}}{\gamma(\tau)}\qquad\text{ for }\tau\in T
  \end{gather}
  where
  \begin{equation}\label{eq:defMut}
\M(s):=\Mpure(t+s)-\Mpure(t)=\lambda s + \sigma (W(t+{s}){-W(t)}).
\end{equation}
\end{theorem}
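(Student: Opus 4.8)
The plan is to show that the B-series $B(\varphi_t, x; h)$ with coefficients $\varphi_t(\tau)(h) = \M(h)^{\rho(\tau)}/\gamma(\tau)$ formally satisfies the integral equation equivalent to \eqref{eq:sde}, namely $X(t+h) = x + \int_0^h f(X(t+s))\circ\dMpure(t+s)$, which in the notation of \eqref{eq:defMut} reads $X(t+h) = x + \int_0^h f(X(t+s))\,\diff\M(s)$. First I would record that, since $\M$ is a Stratonovich differential, the usual chain rule applies, so if $Y(s) = B(\varphi_s^{(0)}, x; s)$ denotes the candidate series (with $\M(s)$ in place of $\M(h)$) then differentiating term by term gives $\diff Y = \sum_{\tau} \alpha(\tau) \rho(\tau) \M(s)^{\rho(\tau)-1}/\gamma(\tau)\, F(\tau)(x)\,\diff\M(s)$.

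Next I would compute the right-hand side $f(Y(s))$ as a B-series. This is the standard ODE computation: substituting a B-series into $f$ produces, by Taylor expansion of $f$ around $x$, a new B-series whose coefficient of the tree $\tau = [\tau_1,\dots,\tau_\kappa]$ is (with the appropriate combinatorial factor coming from repeated subtrees) the product $\frac{1}{r_1!\cdots r_q!}\prod_{j=1}^\kappa \varphi(\tau_j)$ times $F(\tau)(x)$. Using the multiplicativity $\varphi_s(\tau_j)(s) = \M(s)^{\rho(\tau_j)}/\gamma(\tau_j)$ and $\rho(\tau) = 1 + \sum_j \rho(\tau_j)$, the coefficient of $F(\tau)(x)$ in $f(Y(s))$ becomes $\alpha(\tau)\,\M(s)^{\rho(\tau)-1}/\prod_{j}\gamma(\tau_j)$ — I would verify here that the $\alpha$ factors combine exactly as in the definition of $\alpha([\tau_1,\dots,\tau_\kappa])$. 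Matching this against the coefficient of $F(\tau)(x)$ in $\diff Y/\diff\M(s)$ reduces the whole identity, tree by tree, to the single recursion $\gamma([\tau_1,\dots,\tau_\kappa]) = \rho([\tau_1,\dots,\tau_\kappa])\prod_{j=1}^\kappa \gamma(\tau_j)$, which is precisely the defining relation for $\gamma$. Checking the base cases $\tau = \bullet$ (giving $\diff Y = f(x)\diff\M(s) + \cdots$, consistent with $\gamma(\bullet)=1$) and the empty tree (the constant term $x$) completes the bookkeeping.

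The main obstacle, as usual in B-series arguments, is purely combinatorial rather than analytic: one must keep careful track of the symmetry factors $r_1!\cdots r_q!$ when a tree has repeated branches, so that the coefficient produced by composing $f$ with the B-series is exactly $\alpha(\tau)$ and not merely proportional to it. Because the equation \eqref{eq:sde} is driven by the \emph{single} integrand $f$ against the \emph{single} differential $\diff\M$, no colouring of trees is needed and the computation is formally identical to the classical ODE case with $t$ replaced by $\M(s)$; this is exactly why the ODE $\gamma$-recursion reappears unchanged. Once the formal identity is established, one notes that the series solves the (Stratonovich) integral equation, and since \eqref{eq:sde} has — under the implicit smoothness assumptions on $f$ — a unique solution, the B-series $B(\varphi_t, x; h)$ must represent $X(t+h)$.
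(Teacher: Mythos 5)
Your formal computation is sound and contains the same key identity on which the proof this paper relies rests (note the paper itself gives no proof of this theorem; it is quoted from \cite{debrabant17cah}): since Stratonovich calculus obeys the ordinary chain rule, $\int_0^h \M(s)^{n-1}\circ\dM(s)=\M(h)^{n}/n$, so composing $f$ with the series and matching coefficients reduces, tree by tree, exactly to the recursion $\gamma(\tau)=\rh{\tau}\prod_{j}\gamma(\tau_j)$ --- the reduction that makes the single integrand case formally identical to the ODE case. Your route is organized differently, however: \cite{debrabant17cah}, building on the stochastic B-series framework of \cite{debrabant08bao}, first establishes that the exact solution \emph{possesses} a B-series expansion whose weights satisfy $\varphit(\tau)(h)=\int_0^h\prod_{j}\varphit(\tau_j)(s)\circ\dM(s)$ for $\tau=[\tau_1,\dots,\tau_\kappa]$, and then evaluates these weights in closed form by induction; you instead posit the closed-form weights \eqref{eq:explweightexsol}, verify that the resulting series formally solves the Stratonovich integral equation, and appeal to uniqueness of solutions of \eqref{eq:sde}.

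That last step is the genuine soft spot. A B-series is a formal (in general divergent) series, so showing that it satisfies the integral equation term by term, combined with pathwise uniqueness for \eqref{eq:sde}, does not yet identify it with $X(t+h)$: uniqueness applies to actual solutions, not to formal series, and your argument as written never shows that the exact solution admits a B-series expansion at all; it also tacitly interchanges the infinite sum with Stratonovich differentiation/integration. To close this, you need either the truncated stochastic Taylor/B-series expansion of the exact solution with a remainder estimate --- which is precisely what the construction in \cite{debrabant08bao,debrabant17cah} supplies, and which simultaneously legitimizes the term-by-term manipulations --- or you must work with truncations of a fixed order and compare coefficients of the finite expansions. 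With that ingredient added, your verification argument (composition lemma with the symmetry factors absorbed into $\alpha$, plus the $\gamma$-recursion) is a perfectly acceptable alternative to the derivation in the cited reference.
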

That is, the B-series of the exact solution of \eqref{eq:sde} coincides with the
 one of the exact solution of an ODE (i.\,e.\ when $\lambda=1$, $\sigma=0$) when replacing $\M(h)$ with $h$.

Next, we derive the B-series for the numerical solution of the single integrand SDE \eqref{eq:sde}.
To do this, we consider B-series methods for the ODE $\dX(t) =  f(X(t))\dt$. Such methods can
be written as \cite{hairer06gni}
\begin{equation}\label{eq:BseriesODE}
Y(t+h)=B(\ODEnumweight,x;h)
\end{equation}
 with ${\ODEnumweight}(\tau)(h)=h^{\rho(\tau)} \cdot \hnumweight(\tau)$ where
$\hnumweight:T\to\R$ fulfilling $\hnumweight(\emptyset)=1$.
Replacing $h$ by $\DM{t,t+h}=\M(h)$ defined in \eqref{eq:defMut}
we consider therefore the following B-series method for solving \eqref{eq:sde}:
\begin{definition}\label{def:ODEBSeriesmethod}
Given an ODE-B-series method \eqref{eq:BseriesODE} for the ODE $\dX(t) =  f(X(t))\dt$, the corresponding ODE-B-series method for \eqref{eq:sde} is given by
\begin{equation}\label{eq:Bseriesmethod}
  Y(t+h)=B(\numweight,x;h),
\end{equation}
where $\numweight(\tau){(h)}={(}\DM{{t,t+h}}{)}^{\rh{\tau}}\hnumweight(\tau)$.
\end{definition}
Now we give the definitions of both weak and strong convergence used in this note.

Let $C_P^l(\R^d, \R^{\hat{d}})$ denote the space of all $g \in
C^l(\R^d,\R^{\hat{d}})$ fulfilling a polynomial growth condition \cite{kloeden99nso}
and $\Ih$ be the discretized time interval on which the numerical approximations are calculated.
\begin{definition}
  A time discrete approximation $Y=(Y(t))_{t \in {\Ih}}$ converges weakly with order $p$ to $X$ at time $t \in {\Ih}$ as {the maximum step size}
  $\hmax\rightarrow 0$ if for each $g \in
  C_P^{2(p+1)}(\R^d, \R)$ there exist a constant $C_g$
  and a finite $\delta_0 > 0$ such that
  \begin{equation*}
    | \E(g(Y(t))) - \E(g(X(t))) | \leq C_g \, \hmax^p
  \end{equation*}
holds for each $\hmax\in \, ]0,\delta_0[\,$.
\end{definition}
Whereas weak approximation methods are used to estimate the expectation of functionals of the solution, strong approximation methods approach the solution
path-wise.
In this article, next to weak convergence, we will consider mean-square instead of strong convergence.
\begin{definition}
  A time discrete approximation $Y=(Y(t))_{t \in {\Ih}}$ converges
  in the mean-square sense with order $p$ to $X$ at time $t \in {\Ih}$ as
  {the maximum step size} $\hmax\rightarrow 0$ if there {exist} a constant
  $C$
  and a finite $\delta_0 > 0$ such that
  \begin{equation*}
    \sqrt{\E(\|Y(t) - X(t)\|^2)}\leq C \, \hmax^p
  \end{equation*}
holds for each $\hmax \in \, ]0,\delta_0[\,$.
\end{definition}
Observe that, by Jensen's inequality, mean-square convergence implies strong convergence of the same order.

We are now in position to state the main result of the present publication.
\begin{theorem}\label{th:main}
Assume that the B-series method \eqref{eq:Bseriesmethod} is of deterministic order $\pd$ and let $\pmu = \lfloor\pd/2 \rfloor$.
Further, let $f\in C^{2\pmu+1}(\R^d,\R^d)$
and $f$ and $f'f$ fulfill a Lipschitz condition.
 Finally, assume
\begin{itemize}
    \item for mean-square convergence, that all elementary differentials $F(\tau)$ fulfill a linear growth condition,
\item respectively for weak convergence, that
    $f\in C_P^{2\pmu+1}(\R^d,
        \R^d)$.
  \end{itemize}
Then this very same B-series method is of mean-square as well as weak order $\pmu$ when applied to the single integrand SDE \eqref{eq:sde}
with step size $\DM{t,t+h}$.
  For weak convergence, it suffices that $\DM{t,t+h}$ is chosen such that at least the first $2\pmu+1$ moments coincide
with those of $\M(h)$, and all the others are in $\bO(h^{\pmu+1})$.
\end{theorem}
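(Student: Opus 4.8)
The plan is to combine the two B-series representations --- \cref{th:BseriesexactsolutionODE} for the exact solution and \cref{def:ODEBSeriesmethod} for the scheme --- with the classical Milstein-type fundamental theorems for mean-square and for weak convergence (as used in \cite{debrabant17cah}; see also \cite{kloeden99nso}), so that the whole argument reduces to two ingredients: (i) the purely algebraic fact that the two B-series agree through order $\pd$, and (ii) elementary moment estimates for the random step size $\M(h)=\DM{t,t+h}$.

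First I would record the moment estimates. Since $\M(h)=\lambda h+\sigma(W(t+h)-W(t))$ is Gaussian with mean $\lambda h$ and variance $\sigma^2 h$, one has $\E[\,|\M(h)|^k\,]=\bO(h^{k/2})$ for every $k\in\N$, while the vanishing of the odd central moments of a Gaussian gives the sharper bound $\E[\M(h)^k]=\bO(h^{\lceil k/2\rceil})$ for the signed moments. In particular, for every $k\geq\pd+1$ (hence $k\geq 2\pmu+1$) we get $\sqrt{\E[\M(h)^{2k}]}=\bO(h^{\pmu+1/2})$ and $\E[\M(h)^{k}]=\bO(h^{\pmu+1})$; for a general increment $\DM{t,t+h}$ these are exactly the properties assumed (first $2\pmu+1$ moments as for $\M(h)$, the remaining ones $\bO(h^{\pmu+1})$).

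Next I would analyse a single step started from a fixed deterministic point $(t,\xvar)$. By \cref{th:BseriesexactsolutionODE} and \cref{def:ODEBSeriesmethod}, $X(t+h)=B(\varphit,\xvar;h)$ and $Y(t+h)=B(\numweight,\xvar;h)$, and their coefficients $\alpha(\tau)\M(h)^{\rh\tau}/\gamma(\tau)$ and $\alpha(\tau)\M(h)^{\rh\tau}\hnumweight(\tau)$ coincide for every tree $\tau$ with $\rh\tau\leq\pd$, because the underlying ODE-B-series method has deterministic order $\pd$. Truncating both B-series after the trees of order $\pd+1$ --- which is possible under $f\in C^{2\pmu+1}$ since $2\pmu+1\geq\pd$, and using the Lipschitz conditions on $f$ and $f'f$ together with the linear growth of the $F(\tau)$ to bound the stochastic-Taylor remainders --- one arrives at
\[
  X(t+h)-Y(t+h)=\sum_{\rh\tau=\pd+1}\alpha(\tau)\Bigl(\tfrac1{\gamma(\tau)}-\hnumweight(\tau)\Bigr)\M(h)^{\pd+1}F(\tau)(\xvar)+R(h),
\]
with $\sqrt{\E[\|R(h)\|^2]}$ and $\|\E[R(h)]\|$ both $\bO(h^{\pmu+1})$. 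The point is that the surviving leading sum carries the \emph{deterministic} factor $F(\tau)(\xvar)$, so that taking $\E[\cdot]$ lets the sharper signed-moment bound act, whereas the $L^2$-norm only sees $\sqrt{\E[\M(h)^{2(\pd+1)}]}$. Inserting the moment estimates yields the one-step bounds
\[
  \bigl\|\E[X(t+h)-Y(t+h)]\bigr\|=\bO(h^{\pmu+1}),\qquad
  \sqrt{\E\bigl[\|X(t+h)-Y(t+h)\|^2\bigr]}=\bO(h^{\pmu+1/2}),
\]
uniformly for $\xvar$ in bounded sets, with constants of at most polynomial growth in $\xvar$ thanks to the growth assumptions.

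It then remains to invoke the fundamental theorems. For mean-square convergence one has local mean order $p_1=\pmu+1$ and local mean-square order $p_2=\pmu+\tfrac12$, so that the hypothesis $p_1\geq p_2+\tfrac12$ holds with equality and the global mean-square order equals $p_2-\tfrac12=\pmu$; strong convergence of order $\pmu$ then follows by Jensen's inequality. For weak convergence one Taylor-expands $g\in C_P^{2(\pmu+1)}$ about $\xvar$ (using $f\in C_P^{2\pmu+1}$ for the growth of the one-step weak error), so that $\E[g(X(t+h))-g(Y(t+h))]$ becomes a finite sum of products of B-series coefficients times moments of the increment, plus an $L^2$-controlled remainder; the products built only from trees of total order $\leq\pd$ cancel, every remaining moment of the increment is $\bO(h^{\pmu+1})$ --- this is exactly where only the coincidence of the first $2\pmu+1$ moments and the $\bO(h^{\pmu+1})$-smallness of the higher ones are used --- and the remainder is $\bO(h^{\pmu+1})$, whence local weak order $\pmu+1$ and global weak order $\pmu$. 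The main obstacle is the rigorous handling of the remainder $R(h)$ in both the mean and the mean-square sense with only $f\in C^{2\pmu+1}$ available: one must split off precisely the order-$(\pd+1)$ B-series term, so that the sign cancellation of the Gaussian moments can be exploited in the mean, and dominate everything beyond it by absolute moments $\E[|\M(h)|^{k}]$ with $k\geq\pd+2$, all of order $\bO(h^{\pmu+1})$, keeping careful track of how many derivatives of $f$ each elementary differential and each remainder integrand consumes (truncating one order earlier when $\pd$ is odd) --- the regularity $C^{2\pmu+1}$ being sharp for this.
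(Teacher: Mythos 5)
Your proposal is correct and, in substance, it is the very argument the paper relies on: the paper's proof is just the observation that the proof of \cite[Theorem~1.1]{debrabant17cah} for Runge--Kutta methods uses only the B-series properties of the exact and numerical solutions recorded in \cref{th:BseriesexactsolutionODE} and \cref{def:ODEBSeriesmethod}, and your sketch reconstructs exactly that underlying argument (coefficient matching through order $\pd$, the Gaussian moment estimates with odd-moment cancellation for the local mean error versus the $L^2$ estimate, and the Milstein-type fundamental theorems for the local-to-global step in the mean-square and weak senses). In other words, you have written out, in self-contained form, the proof that the paper delegates by citation, and your handling of where each hypothesis enters is consistent with that reference.
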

\begin{proof}
To prove this result, one first observes that the corresponding Theorem for Runge--Kutta methods \cite[Theorem 1.1]{debrabant17cah} only uses
that the B-series of the exact and numerical solutions fulfil the properties summarized in \cref{def:ODEBSeriesmethod}.
One can then directly extend the proof to the present situation of B-series methods.
\end{proof}

\section{Applications to geometric numerical integration of single integrand SDEs}\label{sec:rdmDE}
Geometric properties of numerical schemes for single integrand SDEs \eqref{eq:sde} are closely related
to those of numerical schemes for their corresponding ODEs. This relation, which holds for B-series as well
as non B-series (e.\,g.\ volume preserving) methods, can be specified furthermore: In principle, when applying
any deterministic geometric numerical integrators to single integrand SDEs \eqref{eq:sde} with random step size $\DM{t,t+h}$,
one then obtains the same geometric property as by the corresponding deterministic numerical scheme.
This is because the random perturbation in some sense respects the geometric structure of the phase space.
Hence, proofs of conservation properties in the deterministic setting can be adapted to single integrand Stratonovich SDEs \eqref{eq:sde}.
With the main idea used in this paper, one easily derives several results already obtained in the literature
as well as new ones for geometric numerical integrators of single integrand SDEs \eqref{eq:sde}:
\begin{enumerate}
\item If the deterministic numerical method preserves linear or quadratic invariants, so does it for single integrand SDEs.
See the example below and e.g.\  \cite{MR3273263}.
\item If the deterministic scheme is energy or invariant-preserving, then it is also energy or invariant-preserving for single integrand SDEs.
See the example below and e.g.\  \cite{MR1747753,cohen14epi,MR3973463,MR3952248}.
\item If the deterministic numerical method is symmetric, then it is also symmetric for the SDE \eqref{eq:sde}.
\item If the deterministic time integrator is symplectic and $f(x)=J^{-1}\nabla H(x)$, then it is symplectic for single integrand SDEs, e.g.\  \cite{MR2674236,MR1951908,MR3579605}.
\item Deterministic (symmetric) projection methods can be directly applied to single integrand SDEs with constraints, e.g.\  \cite{WANG2019123305}.
\item If the deterministic numerical method is volume preserving, then it is also volume preserving for single integrand SDEs.
\item If the deterministic numerical scheme is a Poisson integrator and $f(x)=B(x)\nabla H(x)$, where $B(x)$ represents a Poisson bracket, then it is also a Poisson integrator for single integrand SDEs.
\end{enumerate}

\section{Application to stochastic perturbations of Poisson systems}\label{sec:appli}
As application of Theorem~\ref{th:main} and the principles of the previous section,
we propose a high order energy-preserving and Casimir-preserving scheme
for the stochastic rigid body \cite{cohen14epi}. The equations of motion of this stochastic rigid body
are a Lie-Poisson system:
\begin{eqnarray}\label{srb}
\begin{pmatrix}
\dX_{[1]}\\
\dX_{[2]}\\
\dX_{[3]}
\end{pmatrix}
=
\begin{pmatrix}
 0 & -X_{[3]} & X_{[2]}\\
X_{[3]} & 0 & -X_{[1]}\\
-X_{[2]} & X_{[1]} & 0
\end{pmatrix}
\begin{pmatrix}
X_{[1]}/I_1\\
X_{[2]}/I_2\\
X_{[3]}/I_3
\end{pmatrix}
\bigl(\dt+c\circ\dW\bigr),
\end{eqnarray}
with some initial value $X(0) \in \mathbb{R}^3$
at $t_0=0$, where $X=(X_{[1]},X_{[2]},X_{[3]})^\top$ and
$I=(I_1,I_2,I_3)$ are the moments of inertia.
The Hamiltonian
\[
H(X)=\frac12\big(X_{[1]}^2/I_1+X_{[2]}^2/I_2+X_{[3]}^2/I_3\big)
\]
is thus a conserved quantity as well as the quadratic Casimir
\[
C(X)=\|X\|_2^2=X_{[1]}^2+X_{[2]}^2+X_{[3]}^2.
\]
Note that the right hand side $f$ of \eqref{srb} is not globally Lipschitz continuous and does thus
a priori not fulfill the conditions of Theorem~\ref{th:main}.
However, for methods that conserve the Casimir, we can replace $f$ by a function being zero outside a suitable ball and fulfilling the conditions of Theorem~\ref{th:main}, e.\,g.\ replacing $f$ by
\[
\tilde{f}(X)=f(X)\frac{\varphi(4C(X(0))-C(X))}{\varphi(C(X)-2C(X(0)))+\varphi(4C(X(0))-C(X))}
\]
where
\[\varphi(r)=\begin{cases}
  0&\text{for }{r\leq0}\\
  e^{-1/r}&\text{for }r>0.
\end{cases}
\]
Note that $\tilde{f}(X)=f(X)$ for all $\|X\|_2\leq\sqrt{2}\|X(0)\|$ and $\tilde{f}(X)=0$ for all $\|X\|_2\geq2\|X(0)\|$ (see e.\,g.\ \cite{MR1930277}), and all derivatives of $\tilde{f}$ remain bounded. A similar modification can be made for numerical methods that preserve the Hamiltonian of the deterministic rigid body.
\newline
As a starting deterministic method, we choose the linear energy-preserving integrators of orders
$2,4$ and $6$ from \cite{MR2784654}.
By Theorem~\ref{th:main}, we thus expect a first, resp.\ second, resp.\ third strong and weak order
energy-preserving numerical integrator for the stochastic rigid body \eqref{srb}, denoted by EPs1, EPs2, EPs3.
Recall that these novel numerical methods are simply given by replacing the step size
$h$ by $\lambda h+\sigma\Delta W_n$ in the deterministic scheme.
The orders of convergence can be seen in Figures~\ref{fig:srb} and \ref{fig:wrb},
where the numerically determined orders of convergence $\hat{p}$ correspond to the slopes of the dashed regression lines.
The parameters
for these numerical experiments are: $T_{end}=0.5, c=0.5, I=(0.345,0.653,1), X(0)=(0.8,0.6,0)$,
and $200$ samples are used to approximate the expectations. We have checked that $200$ samples are sufficient for these numerical experiments.
The reference solution is calculated with $h_{ref}=2^{-14}$ and EPs3.
\begin{figure}
\begin{center}
\includegraphics*{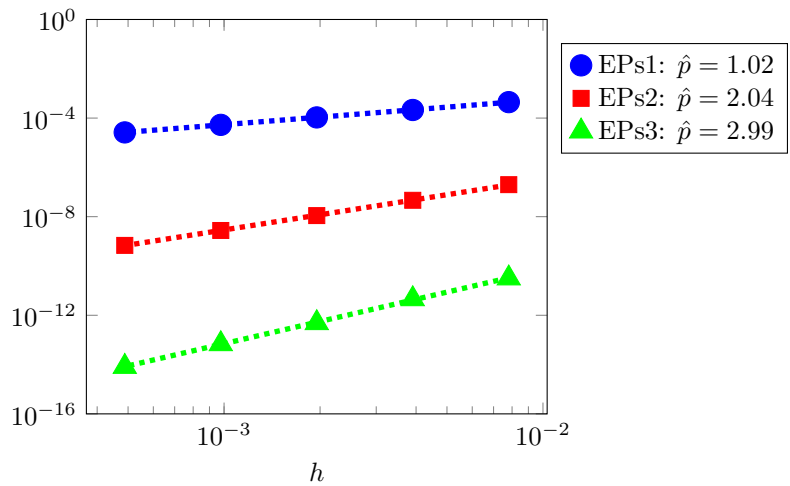}
\caption{Mean-square orders of convergence of the numerical schemes EPs1 to EPs3 for the stochastic rigid body \eqref{srb}.
Used step sizes: $h=2^{-7}$ to $h=2^{-11}$.}
\label{fig:srb}
\end{center}
\end{figure}

\begin{figure}
\begin{center}
\includegraphics*{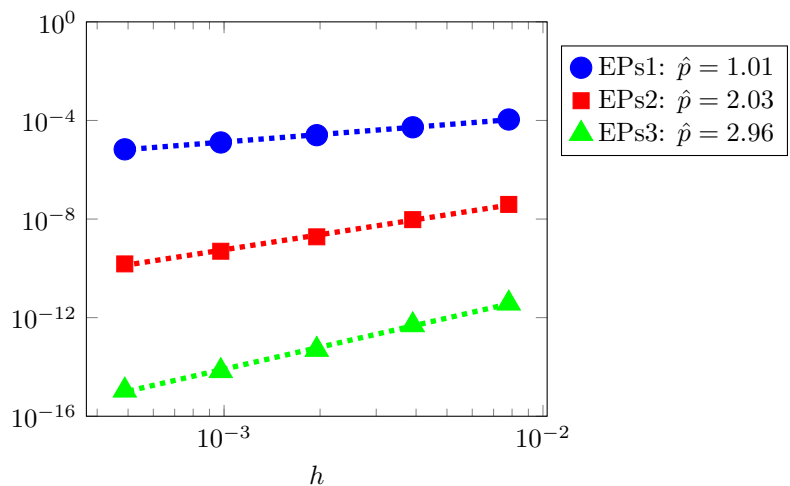}
\caption{Weak orders of convergence of the numerical schemes EPs1 to EPs3 for the second moments
of approximations of solutions to the stochastic rigid body \eqref{srb}. Used step sizes: $h=2^{-7}$ to $h=2^{-11}$.}
\label{fig:wrb}
\end{center}
\end{figure}
Furthermore, as it can be seen in Figure~\ref{fig:invariants}, for the example of the third order method,
these numerical schemes not only preserve the energy but also the quadratic Casimir $C(X)$. This follows from the discussion in Section~\ref{sec:rdmDE}, as the numerical integrators from \cite{MR2784654} are known to preserve quadratic Casimirs.
\begin{figure}
\begin{center}
\includegraphics*[height=6cm,keepaspectratio]{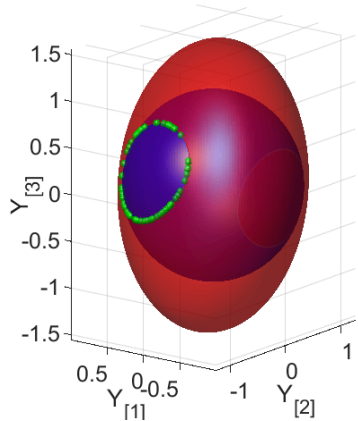}
\caption{Numerical simulation with the third order method (one path) for the stochastic rigid body problem (green points) and the two invariants $H(Y)=H(Y(0))$ (blue) and $C(Y)=C(Y(0))$ (red), with $T_{end}=5$, $h=2^{-4}$, other parameters as before.}
\label{fig:invariants}
\end{center}
\end{figure}

\section*{Acknowledgements}
We appreciate the referees' comments on an earlier version of the paper.
The work of DC was partially supported by the Swedish Research Council (VR) (projects nr. 2018-04443).


\begin{thebibliography}{10}

\bibitem{MR3218332}
C.~A. Anton, Y.~S. Wong, and J.~Deng.
\newblock Symplectic schemes for stochastic {H}amiltonian systems preserving
  {H}amiltonian functions.
\newblock {\em Int. J. Numer. Anal. Model.}, 11(3):427--451, 2014.

\bibitem{barrio05pot}
R.~Barrio.
\newblock Performance of the {T}aylor series method for {ODE}s/{DAE}s.
\newblock {\em Appl. Math. Comput.}, 163(2):525--545, 2005.

\bibitem{MR629977}
J.-M. Bismut.
\newblock {\em M\'{e}canique al\'{e}atoire}, volume 866 of {\em Lecture Notes
  in Mathematics}.
\newblock Springer-Verlag, Berlin-New York, 1981.
\newblock With an English summary.

\bibitem{burrage96hso}
K.~Burrage and P.~M. Burrage.
\newblock High strong order explicit {R}unge--{K}utta methods for stochastic
  ordinary differential equations.
\newblock {\em Appl. Numer. Math.}, 22(1-3):81--101, 1996.
\newblock Special issue celebrating the centenary of Runge-Kutta methods.

\bibitem{burrage00oco}
K.~Burrage and P.~M. Burrage.
\newblock Order conditions of stochastic {R}unge-{K}utta methods by
  {$B$}-series.
\newblock {\em SIAM J. Numer. Anal.}, 38(5):1626--1646, 2000.

\bibitem{butcher08nmf}
J.~C. Butcher.
\newblock {\em {Numerical methods for ordinary differential equations. 2nd
  revised ed.}}
\newblock {Hoboken, NJ: John Wiley \& Sons. xix, 463~p.}, 2008.

\bibitem{celledoni09epr}
E.~Celledoni, R.~I. McLachlan, D.~I. McLaren, B.~Owren, G.~R.~W. Quispel, and
  W.~M. Wright.
\newblock Energy-preserving {R}unge-{K}utta methods.
\newblock {\em M2AN Math. Model. Numer. Anal.}, 43(4):645--649, 2009.

\bibitem{MR3506778}
C.~Chen, D.~Cohen, and J.~Hong.
\newblock Conservative methods for stochastic differential equations with a
  conserved quantity.
\newblock {\em Int. J. Numer. Anal. Model.}, 13(3):435--456, 2016.

\bibitem{cohen14epi}
D.~Cohen and G.~Dujardin.
\newblock Energy-preserving integrators for stochastic {P}oisson systems.
\newblock {\em Commun. Math. Sci.}, 12(8):1523--1539, 2014.

\bibitem{MR2784654}
D.~Cohen and E.~Hairer.
\newblock Linear energy-preserving integrators for {P}oisson systems.
\newblock {\em BIT}, 51(1):91--101, 2011.

\bibitem{debrabant17cah}
K.~Debrabant and A.~Kv\ae~rn\o.
\newblock Cheap arbitrary high order methods for single integrand {SDE}s.
\newblock {\em BIT}, 57(1):153--168, 2017.

\bibitem{debrabant08bao}
K.~Debrabant and A.~Kv{\ae}rn{\o}.
\newblock B-series analysis of stochastic {R}unge-{K}utta methods that use an
  iterative scheme to compute their internal stage values.
\newblock {\em SIAM J. Numer. Anal.}, 47(1):181--203, 2008/09.

\bibitem{debrabant11bao}
K.~Debrabant and A.~Kv{\ae}rn{\o}.
\newblock B-series analysis of iterated {T}aylor methods.
\newblock {\em BIT}, 51(3):529--553, 2011.

\bibitem{debrabant11cos}
K.~Debrabant and A.~Kv{\ae}rn{\o}.
\newblock Composition of stochastic {B}-series with applications to implicit
  {T}aylor methods.
\newblock {\em Appl. Numer. Math.}, 61(4):501--511, 2011.

\bibitem{hairer06gni}
E.~Hairer, C.~Lubich, and G.~Wanner.
\newblock {\em {G}eometric numerical integration}, volume~31 of {\em Springer
  Series in Computational Mathematics}.
\newblock Springer-Verlag, Berlin, second edition, 2006.
\newblock Structure-preserving algorithms for ordinary differential equations.

\bibitem{hairer10sodI}
E.~Hairer, S.~P. N{\o}rsett, and G.~Wanner.
\newblock {\em Solving ordinary differential equations. I: Nonstiff problems.
  2nd revised ed., 3rd corrected printing}.
\newblock {Springer Series in Computational Mathematics 8. Berlin: Springer.
  xv, 528~p.}, 2010.

\bibitem{MR3273263}
J.~Hong, D.~Xu, and P.~Wang.
\newblock Preservation of quadratic invariants of stochastic differential
  equations via {R}unge-{K}utta methods.
\newblock {\em Appl. Numer. Math.}, 87:38--52, 2015.

\bibitem{MR2861707}
J.~Hong, S.~Zhai, and J.~Zhang.
\newblock Discrete gradient approach to stochastic differential equations with
  a conserved quantity.
\newblock {\em SIAM J. Numer. Anal.}, 49(5):2017--2038, 2011.

\bibitem{kloeden99nso}
P.~E. Kloeden and E.~Platen.
\newblock {\em Numerical solution of stochastic differential equations},
  volume~21 of {\em Applications of Mathematics}.
\newblock Springer-Verlag, Berlin, 2 edition, 1999.

\bibitem{komori07mrt}
Y.~Komori.
\newblock Multi-colored rooted tree analysis of the weak order conditions of a
  stochastic {R}unge-{K}utta family.
\newblock {\em Appl. Numer. Math.}, 57(2):147--165, 2007.

\bibitem{komori97rta}
Y.~Komori, T.~Mitsui, and H.~Sugiura.
\newblock Rooted tree analysis of the order conditions of {ROW}-type scheme for
  stochastic differential equations.
\newblock {\em BIT}, 37(1):43--66, 1997.

\bibitem{MR3973463}
X.~Li, Q.~Ma, and X.~Ding.
\newblock High-order energy-preserving methods for stochastic {P}oisson
  systems.
\newblock {\em East Asian J. Appl. Math.}, 9(3):465--484, 2019.

\bibitem{MR3952248}
X.~Li, C.~Zhang, Q.~Ma, and X.~Ding.
\newblock Arbitrary high-order {EQUIP} methods for stochastic canonical
  {H}amiltonian systems.
\newblock {\em Taiwanese J. Math.}, 23(3):703--725, 2019.

\bibitem{MR2956993}
Q.~Ma, D.~Ding, and X.~Ding.
\newblock Symplectic conditions and stochastic generating functions of
  stochastic {R}unge-{K}utta methods for stochastic {H}amiltonian systems with
  multiplicative noise.
\newblock {\em Appl. Math. Comput.}, 219(2):635--643, 2012.

\bibitem{MR1951908}
G.~N. Milstein, Y.~M. Repin, and M.~V. Tretyakov.
\newblock Numerical methods for stochastic systems preserving symplectic
  structure.
\newblock {\em SIAM J. Numer. Anal.}, 40(4):1583--1604, 2002.

\bibitem{MR1747753}
T.~Misawa.
\newblock Energy conservative stochastic difference scheme for stochastic
  {H}amilton dynamical systems.
\newblock {\em Japan J. Indust. Appl. Math.}, 17(1):119--128, 2000.

\bibitem{MR2674236}
T.~Misawa.
\newblock Symplectic integrators to stochastic {H}amiltonian dynamical systems
  derived from composition methods.
\newblock {\em Math. Probl. Eng.}, pages Art. ID 384937, 12, 2010.

\bibitem{MR1930277}
J.~Nestruev.
\newblock {\em Smooth manifolds and observables}, volume 220 of {\em Graduate
  Texts in Mathematics}.
\newblock Springer-Verlag, New York, 2003.

\bibitem{quispel08anc}
G.~R.~W. Quispel and D.~I. McLaren.
\newblock A new class of energy-preserving numerical integration methods.
\newblock {\em J. Phys. A}, 41(4):045206, 7, 2008.

\bibitem{roessler04ste}
A.~R{\"o}{\ss}ler.
\newblock Stochastic {T}aylor expansions for the expectation of functionals of
  diffusion processes.
\newblock {\em Stoch. Anal. Appl.}, 22(6):1553--1576, 2004.

\bibitem{roessler06rta}
A.~R{\"o}{\ss}ler.
\newblock Rooted tree analysis for order conditions of stochastic
  {R}unge--{K}utta methods for the weak approximation of stochastic
  differential equations.
\newblock {\em Stoch. Anal. Appl.}, 24(1):97--134, 2006.

\bibitem{MR2669396}
A.~R{\"o}{\ss}ler.
\newblock Runge-{K}utta methods for the strong approximation of solutions of
  stochastic differential equations.
\newblock {\em SIAM J. Numer. Anal.}, 48(3):922--952, 2010.

\bibitem{MR2739567}
A.~R\"{o}{\ss}ler.
\newblock Stochastic {T}aylor expansions for functionals of diffusion
  processes.
\newblock {\em Stoch. Anal. Appl.}, 28(3):415--429, 2010.

\bibitem{sobczyk87smf}
K.~Sobczyk.
\newblock Stochastic models for fatigue damage of materials.
\newblock {\em Adv. in Appl. Probab.}, 19(3):652--673, 1987.

\bibitem{MR3579605}
P.~Wang, J.~Hong, and D.~Xu.
\newblock Construction of symplectic {R}unge-{K}utta methods for stochastic
  {H}amiltonian systems.
\newblock {\em Commun. Comput. Phys.}, 21(1):237--270, 2017.

\bibitem{WANG2019123305}
Z.~Wang, C.~Wang, Q.~Ma, and X.~Ding.
\newblock Numerical simulations for stochastic differential equations on
  manifolds by stochastic symmetric projection method.
\newblock {\em Physica A: Statistical Mechanics and its Applications}, page
  123305, 2019.

\bibitem{MR3506248}
A.~Xiao and X.~Tang.
\newblock High strong order stochastic {R}unge-{K}utta methods for
  {S}tratonovich stochastic differential equations with scalar noise.
\newblock {\em Numer. Algorithms}, 72(2):259--296, 2016.

\end{thebibliography}
\end{document}